\documentclass[12pt]{article}

\usepackage{amsmath,amssymb,amsthm,mathtools}
\usepackage{algorithm}
\usepackage{algorithmic}
\usepackage{booktabs}
\usepackage{array}
\usepackage{float}
\usepackage{url}

\newtheorem{theorem}{Theorem}
\newtheorem{proposition}[theorem]{Proposition}
\newtheorem{corollary}[theorem]{Corollary}

\DeclareMathOperator{\Irr}{Irr}
\DeclareMathOperator{\End}{End}

\DeclareMathOperator{\Span}{span}
\DeclareMathOperator{\Tr}{Tr}

\DeclareMathOperator{\Ind}{Ind}
\DeclareMathOperator{\Res}{Res}

\title{From Characters to Matrices: An Elementary Construction of Irreducible Representations of Finite Groups}

\author{Yu Hsuan Hsieh \and Ming-Hsuan Kang}

\date{}

\begin{document}

\maketitle

\begin{abstract}
Let \(G\) be a finite group and let \(\chi\) be an ordinary irreducible
character.  We give an elementary algorithm which constructs explicit
matrices affording \(\chi\).  The regular representation provides a canonical
ambient representation, and the usual central idempotent projects onto the
\(\chi\)-isotypic component.  The main step is then to maximize the squared
norm of a diagonal matrix coefficient on the unit sphere of this component.
The maximum is \(1/\chi(1)\), and it is attained precisely by vectors whose
cyclic span is an irreducible subrepresentation affording \(\chi\).  Thus the
construction reduces the passage from characters to matrices to a concrete
optimization problem.

The same extraction method applies inside any smaller ambient representation
containing \(\chi\), such as an induced representation from a subgroup.  We complement the theory with a discussion on dimension reduction, robust numerical implementation, and an explicit
\(S_4\) example.
\end{abstract}

\noindent
\textbf{Keywords.}
finite groups, irreducible representations, characters, regular representation, matrix coefficients, computational representation theory

\medskip

\noindent
\textbf{MSC 2020.}
20C15, 20C40, 20-08

\section{Introduction}

A character table records traces, but not matrices.  Thus, even when an
irreducible character \(\chi\) of a finite group \(G\) is known, constructing
explicit matrices affording \(\chi\) remains a separate problem.

Over \(\mathbb C\), irreducible characters determine irreducible
representations up to equivalence, so such matrices exist in principle.
However, the standard argument is essentially existential.  Existing
computational approaches, such as the GAP package \texttt{REPSN}, use
structural information, subgroup methods, and linear algebra; see
\cite{Dabbaghian2005,DabbaghianDixon2010,RepsnManual}.

The purpose of this note is to give an elementary construction of explicit
matrix representations from irreducible characters.  The input consists only
of the group law of a finite group \(G\) and an irreducible character \(\chi\).
The regular representation \(\mathbb C[G]\) gives a canonical ambient
representation containing every irreducible representation.  Applying the
central idempotent attached to \(\chi\) gives the \(\chi\)-isotypic component.
The remaining problem is to select one irreducible summand inside this
component.

Our selection criterion is variational.  For a unit vector \(v\) in the
\(\chi\)-isotypic component, consider the diagonal matrix coefficient
\[
        f_v(g)=\langle v,\rho(g)v\rangle .
\]
We show that the squared norm of \(f_v\) is at most \(1/\chi(1)\), and that
equality holds precisely when the cyclic span of \(v\) is an irreducible
subrepresentation affording \(\chi\).  Thus one way to pass from characters to
explicit matrices is to solve a concrete maximization problem on a sphere.

The construction has a universal version and a practical version.  The
universal version uses the regular representation, so it applies to every
irreducible character of every finite group.  The practical version uses any
smaller ambient representation containing \(\chi\).  In particular, if
\(\theta\) is a character of a subgroup \(H\leq G\) and
\[
        \langle \chi,\Ind_H^G\theta\rangle_G
        =
        \langle \Res_H^G\chi,\theta\rangle_H
        >
        0,
\]
then the same extraction method may be applied inside \(\Ind_H^G\theta\),
which can be much smaller than the regular representation.

Unlike the classical matrix-coefficient projectors used to decompose an
isotypic component once a matrix realization is already known
\cite[Section 2.7]{Serre}, the present method starts only from the character
\(\chi\) and first constructs one irreducible copy.

A Python implementation of the algorithm, together with the \(S_4\) example
from Section~\ref{sec:s4-example}, is included in the ancillary files of the
arXiv version of this paper.

During the preparation of this manuscript, the authors used ChatGPT as a tool
for editing, code prototyping, and organizing computational examples.  The
authors take full responsibility for the mathematical content, proofs, and
implementation.

\section{Preliminaries}

Throughout the paper, \(G\) is a finite group and all representations are complex representations.
Hermitian inner products are taken to be conjugate-linear in the first variable and linear in the second.
A representation \(\rho:G\to GL(V)\) may always be made unitary by averaging an inner product over \(G\).
Thus, in the sequel, we assume that all representations are unitary.

For functions \(a,b:K\to\mathbb C\) on a finite group \(K\), we use the
standard character pairing, linear in the first variable:
\[
        \langle a,b\rangle_K
        =
        \frac1{|K|}
        \sum_{k\in K}
        a(k)\overline{b(k)}.
\]

Let \(\chi\in \Irr(G)\) be an irreducible character of degree
\[
        d=\chi(1).
\]
For a representation \(\rho:G\to GL(V)\), the \(\chi\)-isotypic projector is
\[
        P_\chi
        =
        \frac{d}{|G|}
        \sum_{g\in G}
        \overline{\chi(g)}\,\rho(g).
\]
This is the standard central idempotent acting on \(V\).
Its image is the \(\chi\)-isotypic component
\[
        V_\chi=P_\chi V.
\]

If \(v\in V\), define the diagonal matrix coefficient
\[
        f_v(g)
        =
        \langle v,\rho(g)v\rangle.
\]
We write
\[
        F(v)
        =
        \frac1{|G|}
        \sum_{g\in G}
        |\langle v,\rho(g)v\rangle|^2.
\]

\section{A Variational Characterization of Irreducible Summands}

The following theorem is the central observation.

\begin{theorem}
\label{thm:main}
Let \(\rho:G\to U(V)\) be a unitary representation, and let \(V_\chi\) be the \(\chi\)-isotypic component, where \(\chi\in\Irr(G)\) has degree \(d\).
If \(v\in V_\chi\) and \(\|v\|=1\), then
\[
        F(v)
        =
        \frac1{|G|}
        \sum_{g\in G}
        |\langle v,\rho(g)v\rangle|^2
        \leq
        \frac1d.
\]
Equality holds if and only if the cyclic subspace
\[
        \Span\{\rho(g)v:g\in G\}
\]
is an irreducible subrepresentation affording \(\chi\).
\end{theorem}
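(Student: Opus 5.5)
Since \(V_\chi\) is isotypic, I will identify it \(G\)-equivariantly and isometrically with \(W\otimes\mathbb C^m\), where \(W\) is a fixed irreducible unitary representation affording \(\chi\), \(m\) is the multiplicity, and \(G\) acts by \(\rho_W(g)\otimes I\) on the first factor only. Such an identification exists because \(V_\chi\) is an orthogonal direct sum of \(m\) copies of \(W\); the inclusions can be chosen to be isometries with mutually orthogonal images, and this gives a unitary intertwiner \(W\otimes\mathbb C^m\to V_\chi\). A unit vector \(v\) then corresponds to a \(d\times m\) matrix \(A\) with \(\Tr(A^*A)=1\), and
\[
\langle v,\rho(g)v\rangle=\Tr\bigl(A^*\rho_W(g)A\bigr)=\Tr\bigl(\rho_W(g)\,AA^*\bigr).
\]
Set \(B=AA^*\), a positive semidefinite \(d\times d\) matrix with trace \(1\). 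Thus \(f_v(g)=\Tr(\rho_W(g)B)=\sum_{i,j}B_{ji}\,\rho_W(g)_{ij}\).

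The key step is to apply the Schur orthogonality relations for the matrix coefficients of \(W\):
\[
\frac1{|G|}\sum_{g}\rho_W(g)_{ij}\overline{\rho_W(g)_{kl}}=\frac1d\,\delta_{ik}\delta_{jl}.
\]
This gives
\[
F(v)=\frac1d\sum_{i,j}|B_{ij}|^2=\frac1d\Tr(B^2).
\]
Since \(B\ge 0\) has trace \(1\), its eigenvalues \(\lambda_k\ge0\) satisfy \(\sum\lambda_k=1\). Hence \(\Tr(B^2)=\sum\lambda_k^2\le\bigl(\sum\lambda_k\bigr)^2=1\), with equality iff \(B\) has rank one, i.e.\ iff \(A\) has rank one. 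This yields \(F(v)\le 1/d\).

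For the equality clause, I will note that \(A\) has rank one iff \(v=w\otimes u\) for some \(w\in W\) and \(u\in\mathbb C^m\). In that case the cyclic span is \(W\otimes u\cong W\), which is irreducible and affords \(\chi\). Conversely, suppose the cyclic span \(U\) is irreducible and affords \(\chi\). Then the \(G\)-maps \(W\to W\otimes\mathbb C^m\) are exactly \(w\mapsto w\otimes u\), since \(\mathrm{Hom}_G(W,W\otimes\mathbb C^m)\cong\mathbb C^m\) by Schur's lemma. So \(U=W\otimes u\) for some \(u\), and therefore \(v\in W\otimes u\) has rank one. More generally, the cyclic span of \(v=\sum_k w_k\otimes e_k\) (written via \(A\)) is \(W\otimes \operatorname{im}(A^{T})\), which has dimension \(d\cdot\operatorname{rank}A\). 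This gives the equivalence directly.

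The main obstacle is the bookkeeping. One must set up the unitary identification \(V_\chi\cong W\otimes\mathbb C^m\) carefully and check that the inner-product convention, conjugate-linear in the first slot, gives exactly \(\Tr(\rho_W(g)B)\) with no stray transposes or conjugates. Once this is in place, Schur orthogonality and the elementary inequality \(\sum\lambda_k^2\le1\) do all the work.
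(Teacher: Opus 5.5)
Your proposal is correct and follows the paper's proof essentially step for step. Your \(B=AA^*\) is exactly the paper's positive operator \(T_v=\sum_j w_jw_j^*\), both arguments use Schur orthogonality to get \(F(v)=\frac1d\Tr(B^2)\le\frac1d\) with equality iff \(B\) has rank one (i.e.\ \(v\) is a pure tensor), and your converse via \(\mathrm{Hom}_G(W,W\otimes\mathbb C^m)\cong\mathbb C^m\) simply makes explicit the paper's one-line claim that an irreducible cyclic span has the form \(W\otimes L\) with \(\dim L=1\).
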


\begin{proof}
Since \(V_\chi\) is isotypic, there is a unitary \(G\)-module isomorphism
\[
        V_\chi \cong W\otimes M,
\]
where \(W\) is an irreducible representation with character \(\chi\), \(\dim W=d\), and \(M\) is a multiplicity space.
The group acts by
\[
        \rho(g)=\pi(g)\otimes I_M,
\]
where \(\pi:G\to U(W)\) affords \(\chi\).

Choose an orthonormal basis \(m_1,\dots,m_r\) of \(M\).
Write
\[
        v=\sum_{j=1}^r w_j\otimes m_j,
        \qquad
        w_j\in W.
\]
Define the positive operator
\[
        T_v
        =
        \sum_{j=1}^r w_j w_j^*
        \in \End(W),
\]
where \(w_jw_j^*\) denotes the rank-one operator \(x\mapsto w_j\langle w_j,x\rangle\).
This convention agrees with our choice that the inner product is
conjugate-linear in the first variable.
Then
\[
        \Tr(T_v)
        =
        \sum_j\|w_j\|^2
        =
        \|v\|^2
        =
        1.
\]
Moreover,
\[
        f_v(g)
        =
        \sum_j
        \langle w_j,\pi(g)w_j\rangle
        =
        \Tr(T_v\pi(g)).
\]

By Schur orthogonality in operator form,
\[
        \frac1{|G|}
        \sum_{g\in G}
        |\Tr(A\pi(g))|^2
        =
        \frac1d
        \Tr(AA^*)
        \qquad
        \text{for all } A\in \End(W).
\]
This is just the usual Schur orthogonality relation for matrix coefficients,
applied after expanding \(A\) in an orthonormal basis of matrix units.
Taking \(A=T_v\), and using \(T_v=T_v^*\), gives
\[
        F(v)
        =
        \frac1d
        \Tr(T_v^2).
\]
Since \(T_v\) is positive semidefinite and \(\Tr(T_v)=1\),
\[
        \Tr(T_v^2)\leq (\Tr T_v)^2=1.
\]
Hence
\[
        F(v)\leq \frac1d.
\]

Equality holds if and only if \(T_v\) has rank one.
Equivalently, the vectors \(w_1,\dots,w_r\) are all proportional.
In that case,
\[
        v=w\otimes m
\]
for some nonzero \(w\in W\) and some unit vector \(m\in M\).
Then
\[
        \Span\{\rho(g)v:g\in G\}
        =
        \Span\{\pi(g)w:g\in G\}\otimes \mathbb C m
        =
        W\otimes \mathbb C m,
\]
which is an irreducible subrepresentation affording \(\chi\).

Conversely, if the cyclic subspace generated by \(v\) is irreducible, then in the decomposition \(W\otimes M\) it must be of the form \(W\otimes L\) with \(\dim L=1\).
Thus \(v\) is a pure tensor \(w\otimes m\), so \(T_v\) has rank one and equality holds.
\end{proof}

\begin{corollary}
\label{cor:maximum}
On the unit sphere of \(V_\chi\),
\[
        \max_{\|v\|=1} F(v)=\frac1d.
\]
The maximizers are exactly the unit vectors that generate irreducible summands affording \(\chi\).
\end{corollary}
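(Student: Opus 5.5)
The corollary follows from Theorem~\ref{thm:main} once we also show that the bound \(1/d\) is attained, that is, that a maximizer exists. I would proceed in three steps.

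First, I would assume \(V_\chi\neq 0\), i.e.\ that \(\chi\) actually occurs in \(\rho\). This holds in the intended setting \(V=\mathbb C[G]\) and in any ambient representation containing \(\chi\). Otherwise the unit sphere is empty and there is nothing to prove. By Theorem~\ref{thm:main}, every unit vector \(v\in V_\chi\) satisfies \(F(v)\le 1/d\), so the supremum of \(F\) on the unit sphere is at most \(1/d\).

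Second, I would show the bound is attained. Since \(V_\chi\) is a nonzero isotypic component, it contains an irreducible subrepresentation \(U\) affording \(\chi\); for example, any irreducible summand of \(V_\chi\) works, by complete reducibility. Pick any unit vector \(v\in U\). Its cyclic span \(\Span\{\rho(g)v\}\) is a nonzero \(G\)-stable subspace of the irreducible \(U\), so it equals \(U\). The equality case of Theorem~\ref{thm:main} then gives \(F(v)=1/d\). Hence the supremum is a maximum and equals \(1/d\). Alternatively, one can argue as in the proof of the theorem: in the model \(V_\chi\cong W\otimes M\), a pure tensor \(w\otimes m\) with \(\|w\|=\|m\|=1\) has \(T_v\) of rank one.

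Third, I would identify the maximizers. A unit vector \(v\) is a maximizer exactly when \(F(v)=1/d\), and by the equality clause of Theorem~\ref{thm:main} this happens exactly when its cyclic span is an irreducible subrepresentation affording \(\chi\), which is what ``generates an irreducible summand affording \(\chi\)'' means. One small point is that such a cyclic span is automatically a direct summand, by unitarity: its orthogonal complement is \(G\)-stable. The only real content beyond the theorem is this existence step, and it is routine, so I expect no genuine obstacle.
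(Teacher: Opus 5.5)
Your proposal is correct and matches the paper, which treats this corollary as an immediate consequence of Theorem~\ref{thm:main}. The bound and the equality clause give the characterization of maximizers, and attainment comes from a pure tensor \(w\otimes m\), or equivalently any unit vector in an irreducible summand, exactly as you argue.
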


\section{The Universal Algorithm}\label{sec:universal}

Let \(G=\{g_1,\dots,g_N\}\), where \(N=|G|\).  The left regular representation is
\[
        \lambda:G\to GL(\mathbb C[G]),
        \qquad
        \lambda(g)e_h=e_{gh}.
\]
This representation is canonical once an ordering of \(G\) is fixed, and it
contains every irreducible representation of \(G\).  Hence it gives a universal
ambient space for constructing a representation with a prescribed irreducible
character.

\begin{algorithm}[H]
\caption{Character-to-matrices construction}
\label{alg:main}
\begin{algorithmic}[1]
\REQUIRE A finite group \(G\) with computable multiplication, and an irreducible character \(\chi\in\Irr(G)\) of degree \(d\).
\ENSURE Matrices affording an irreducible representation with character \(\chi\).

\STATE Construct the left regular representation \(\lambda:G\to GL(\mathbb C[G])\).
\STATE Form the isotypic projector
\[
        P_\chi
        =
        \frac{d}{|G|}
        \sum_{g\in G}
        \overline{\chi(g)}\,\lambda(g).
\]
\STATE Compute a basis for \(V_\chi=\operatorname{im}(P_\chi)\).
\STATE Choose a vector \(u\in\mathbb C[G]\) with \(P_\chi u\neq 0\), for instance a random or nonsymmetric vector, and set
\[
        v_0=\frac{P_\chi u}{\|P_\chi u\|}.
\]
\STATE Starting from \(v_0\), maximize
\[
        F(v)
        =
        \frac1{|G|}
        \sum_{g\in G}
        |\langle v,\lambda(g)v\rangle|^2
\]
on the unit sphere of \(V_\chi\).
\STATE Let \(v\) be a maximizer, so \(F(v)=1/d\).
\STATE Choose group elements \(h_1,\dots,h_d\in G\) such that
\[
        \lambda(h_1)v,\dots,\lambda(h_d)v
\]
are linearly independent.
\STATE Let \(B\) be the matrix whose columns are these vectors.
\STATE For each generator \(s\) of \(G\), solve
\[
        \lambda(s)B=BA_s
\]
for \(A_s\in M_d(\mathbb C)\).
\RETURN The matrices \(A_s\).
\end{algorithmic}
\end{algorithm}

\begin{theorem}
\label{thm:algorithm}
Algorithm \ref{alg:main} returns matrices defining an irreducible representation of \(G\) with character \(\chi\).
\end{theorem}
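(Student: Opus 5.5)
The argument checks each step of Algorithm~\ref{alg:main} in turn. First, every step can actually be carried out. The left regular representation is unitary for the standard inner product on \(\mathbb C[G]\), since each \(\lambda(g)\) permutes the orthonormal basis \(\{e_h\}\). So Theorem~\ref{thm:main} applies to \(V=\mathbb C[G]\). The isotypic component \(V_\chi=P_\chi\mathbb C[G]\) is nonzero because \(\chi\) occurs in \(\mathbb C[G]\) with multiplicity \(d\). In particular \(P_\chi\neq 0\), so some \(u\) with \(P_\chi u\ne 0\) exists; for instance \(P_\chi e_1\neq 0\), since \(P_\chi\) commutes with right translations and \(e_1\) is cyclic for them. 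The unit sphere of \(V_\chi\) is compact and \(F\) is continuous, so a maximizer exists. By Corollary~\ref{cor:maximum} the maximum equals \(1/d\), which justifies the claim \(F(v)=1/d\) in the sixth step.

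Second, the structure of the maximizer. By the equality case of Theorem~\ref{thm:main}, \(U=\Span\{\lambda(g)v:g\in G\}\) is an irreducible subrepresentation affording \(\chi\), so \(\dim U=d\). The vectors \(\lambda(g)v\), \(g\in G\), span a \(d\)-dimensional space, so some \(d\) of them form a basis of \(U\). This justifies choosing \(h_1,\dots,h_d\), for example by a greedy scan over \(G\). The matrix \(B\) then has full column rank \(d\) and column space \(U\).

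Third, the output matrices. Since \(U\) is \(G\)-stable, each column of \(\lambda(s)B\) lies in \(U=\operatorname{col}(B)\). Hence \(\lambda(s)B=BA_s\) has a solution, and it is unique because \(B\) is injective. For arbitrary \(g\in G\), define \(A_g\) by the same equation; it is the matrix of \(\lambda(g)|_U\) in the basis given by the columns of \(B\). From \(\lambda(gh)B=\lambda(g)BA_h=BA_gA_h\) and injectivity of \(B\) we get \(A_{gh}=A_gA_h\). Thus \(g\mapsto A_g\) is a homomorphism, and it is determined by its values \(A_s\) on the generators. The returned matrices therefore define a representation equivalent to \(\lambda|_U\), with \(\Tr A_g=\Tr(\lambda(g)|_U)=\chi(g)\), and it is irreducible because \(U\) is.

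The only delicate point is the appeal to an exact maximizer. Theorem~\ref{thm:main} and Corollary~\ref{cor:maximum} carry all of the substance, and the theorem should be read as assuming \(v\) is an exact maximizer. Otherwise the cyclic span of \(v\) could be larger than \(d\), and the solve step would fail. The remaining steps are routine linear algebra.
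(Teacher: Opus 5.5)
Your proposal is correct and follows the paper's route. You use $V_\chi\neq 0$, then the equality case of Theorem~\ref{thm:main} (via Corollary~\ref{cor:maximum}) to get an irreducible cyclic span $U$ of dimension $d$, and then read off the matrices of $\lambda|_U$ in an orbit basis. You also correctly fill in details the paper leaves implicit: unitarity of $\lambda$, existence of a maximizer by compactness, unique solvability of $\lambda(s)B=BA_s$, and the homomorphism property $A_{gh}=A_gA_h$.
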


\begin{proof}
The regular representation contains the irreducible representation with character \(\chi\), so \(V_\chi\neq 0\).  By Corollary~\ref{cor:maximum}, a unit vector \(v\in V_\chi\) satisfying \(F(v)=1/d\) generates an irreducible subrepresentation
\[
        U=\Span\{\lambda(g)v:g\in G\}
\]
affording \(\chi\).  Since \(\dim U=d\), one can choose \(h_1,\dots,h_d\in G\) such that the vectors \(\lambda(h_i)v\) form a basis of \(U\).  In this basis, the restriction of \(\lambda\) to \(U\) gives matrices of an irreducible representation with character \(\chi\).
\end{proof}

In exact arithmetic, Step 7 only asks for any independent orbit vectors.  In a
floating-point implementation, one should select columns from the orbit matrix
by rank-revealing linear algebra and solve the equations in Step 9 by least
squares.

\section{Gradient Ascent}\label{sec:gradient}

After computing \(V_\chi=P_\chi V\), choose
\[
        v_0=\frac{P_\chi u}{\|P_\chi u\|}
\]
from a generic vector \(u\) with \(P_\chi u\neq 0\).  For
\[
        c_g(v)=\langle v,\rho(g)v\rangle,
\]
the unitary case has real Euclidean gradient, defined by
\[
        dF_v(h)=\operatorname{Re}\langle h,\nabla F(v)\rangle,
\]
or equivalently by \(\operatorname{Re}\langle\nabla F(v),h\rangle\).  It is
\[
        \nabla F(v)
        =
        \frac{4}{|G|}
        \sum_{g\in G}
        \overline{c_g(v)}\,\rho(g)v.
\]
The spherical gradient in \(V_\chi\) is
\[
        \nabla_{\mathrm{sph}}F(v)
        =
        P_\chi\nabla F(v)
        -
        \langle v,P_\chi\nabla F(v)\rangle v,
\]
and one ascent step is
\[
        v\leftarrow
        \frac{v+\eta\nabla_{\mathrm{sph}}F(v)}
        {\|v+\eta\nabla_{\mathrm{sph}}F(v)\|}.
\]
Here \(\eta>0\) is a step size.
In computations we start from a generic projected vector to avoid obvious
symmetries of the initial data.  When \(F(v)=1/d\), the orbit of \(v\) gives
the desired irreducible subrepresentation.

\section{Using Smaller Ambient Representations}

The regular representation always works, but it may be large.  The same
construction applies to any representation \(\rho:G\to GL(V)\), with character
\(\chi_\rho\), satisfying
\[
        \langle \chi,\chi_\rho\rangle_G>0.
\]
Then \(P_\chi V\neq 0\), and the same maximization extracts one irreducible
copy of \(\chi\).

\begin{proposition}[Reduction from an abelian subgroup]
Let \(A\leq G\) be abelian and let \(\chi\in\Irr(G)\).  Then some linear
character \(\theta\in \Irr(A)\) occurs in \(\Res_A^G\chi\).  For any such
\(\theta\), the induced representation \(\Ind_A^G\theta\) contains \(\chi\).
Its dimension is
\[
        \dim \Ind_A^G\theta=[G:A].
\]
\end{proposition}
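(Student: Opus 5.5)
The proof has three parts, each using only standard facts about complex representations of finite groups.

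\emph{Existence of \(\theta\).} Since \(A\) is abelian, every irreducible character of \(A\) is linear. The restriction \(\Res_A^G\chi\) is the character of a nonzero representation of \(A\) of dimension \(d=\chi(1)\geq 1\). By Maschke's theorem it decomposes as a nonnegative integer combination
\[
        \Res_A^G\chi=\sum_{\theta\in\Irr(A)} m_\theta\,\theta,
        \qquad
        m_\theta=\langle \Res_A^G\chi,\theta\rangle_A\in\mathbb Z_{\geq 0}.
\]
Evaluating at the identity gives \(\sum_\theta m_\theta=d\geq 1\), so some \(m_\theta>0\). That \(\theta\) is a linear character occurring in \(\Res_A^G\chi\).

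\emph{Containment.} Fix any such \(\theta\). Frobenius reciprocity, in the form already displayed in the introduction, gives
\[
        \langle \chi,\Ind_A^G\theta\rangle_G
        =
        \langle \Res_A^G\chi,\theta\rangle_A
        =
        m_\theta>0 .
\]
Hence \(\chi\) occurs in \(\Ind_A^G\theta\) with positive multiplicity. In particular \(P_\chi\neq 0\) on this ambient space, so the extraction of the previous section applies there.

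\emph{Dimension.} The induced module is
\[
        \Ind_A^G\theta=\mathbb C[G]\otimes_{\mathbb C[A]}\mathbb C_\theta .
\]
Choosing coset representatives \(t_1,\dots,t_n\) of \(G/A\), with \(n=[G:A]\), this module has basis \(t_i\otimes 1\). Its dimension is therefore \([G:A]\cdot\theta(1)\), which equals \([G:A]\) because \(\theta\) is linear. Equivalently, one can evaluate the induced character formula at \(g=1\).

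The only step with any content is choosing the correct direction and normalization in Frobenius reciprocity. Since the paper's pairing is linear in the first variable and all multiplicities involved are real integers, the order of the arguments is harmless. I do not expect a genuine obstacle.
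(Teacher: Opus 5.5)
Your proposal is correct and follows essentially the same argument as the paper: every irreducible character of \(A\) is linear, \(\chi(1)>0\) forces some \(\theta\) to occur in \(\Res_A^G\chi\), Frobenius reciprocity gives positive multiplicity of \(\chi\) in \(\Ind_A^G\theta\), and the dimension is \([G:A]\) because \(\theta\) is one-dimensional. You add a few routine details the paper leaves implicit, namely the evaluation \(\sum_\theta m_\theta = d\) at the identity and the coset basis \(t_i\otimes 1\) of the induced module.
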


\begin{proof}
Since \(A\) is abelian, every irreducible character of \(A\) is linear, and
\(\Res_A^G\chi\) is a sum of such characters.  At least one occurs because
\(\chi(1)>0\).  If \(\theta\) occurs, then Frobenius reciprocity gives
\[
        \langle \chi,\Ind_A^G\theta\rangle_G
        =
        \langle \Res_A^G\chi,\theta\rangle_A
        >0.
\]
Thus \(\Ind_A^G\theta\) contains \(\chi\), and its dimension is \([G:A]\)
because \(\theta\) is one-dimensional.
\end{proof}

For a cyclic subgroup \(A=\langle a\rangle\), this choice is completely
explicit: decompose the values of \(\chi(a^j)\) against the linear characters
of \(A\), choose a character with nonzero multiplicity, and induce it to \(G\).

\section{Example: The two-dimensional representation of \(S_4\)}\label{sec:s4-example}

We illustrate the algorithm for the irreducible character \([22]\) of \(S_4\),
using the usual partition notation for irreducible characters of \(S_4\).
Let
\[
        s=(12),
        \qquad
        t=(1234).
\]
We only need the following row of the character table:
\begin{center}
\begin{tabular}{c|ccccc}
\text{cycle type} & \(1^4\) & \(2\,1^2\) & \(2^2\) & \(3\,1\) & \(4\) \\
\hline
\(\chi_{[22]}\) & \(2\) & \(0\) & \(2\) & \(-1\) & \(0\)
\end{tabular}
\end{center}
Thus \(d=2\), and the target value is \(1/d=1/2\).

\subsection*{Regular representation}

Let \(\lambda\) be the regular representation of \(S_4\).  The ambient space has
\[
        \dim \mathbb C[S_4]=24.
\]
Write \(e_g\) for the standard basis vector of \(\mathbb C[S_4]\) indexed by
\(g\), and let \(C_{2^2}\) and \(C_{3,1}\) denote the classes of double
transpositions and 3-cycles, respectively.
The projector onto the \([22]\)-isotypic component is
\[
        P_{[22]}
        =
        \frac1{12}
        \left(
        2\lambda(1)
        +
        2\sum_{g\in C_{2^2}}\lambda(g)
        -
        \sum_{g\in C_{3,1}}\lambda(g)
        \right),
\]
Its image has dimension
\[
        \dim P_{[22]}\mathbb C[S_4]=4.
\]

The canonical vector \(P_{[22]}e_1\) gives the value \(F=1/4\), below the
target value \(1/2\).  Instead we use the nonsymmetric vector
\[
        u=e_1+2e_s+3e_t,
        \qquad
        v_0=\frac{P_{[22]}u}{\|P_{[22]}u\|}.
\]
Applying the formulas in Section~\ref{sec:gradient}, the values progress from
\[
        F(P_{[22]}e_1/\|P_{[22]}e_1\|)=0.250000,
        \qquad
        F(v_0)=0.359375
\]
to \(F(v_\infty)=0.500000=1/2\), where \(v_\infty\) denotes the limiting
vector obtained by the ascent iteration.  Thus the iteration reaches the
maximum, and the orbit of \(v_\infty\) spans one irreducible copy of \([22]\).

Choosing two independent orbit vectors as a basis and restricting \(\lambda(s)\)
and \(\lambda(t)\) gives matrices equivalent to
\[
        \rho(s)
        =
        \begin{pmatrix}
        1&0\\
        -1&-1
        \end{pmatrix},
        \qquad
        \rho(t)
        =
        \begin{pmatrix}
        -1&-1\\
        0&1
        \end{pmatrix}.
\]
The trace check is
\begin{center}
\begin{tabular}{c|cccc}
\text{element} & \((12)\) & \((1234)\) & \((12)(34)\) & \((123)\) \\
\hline
\(\Tr \rho(g)\) & \(0\) & \(0\) & \(2\) & \(-1\) \\
\(\chi_{[22]}(g)\) & \(0\) & \(0\) & \(2\) & \(-1\)
\end{tabular}
\end{center}
Also,
\[
        \rho(s)^2=I,
        \qquad
        \rho(t)^4=I,
        \qquad
        (\rho(s)\rho(t))^3=I.
\]

\subsection*{Reduced ambient representation}

Let
\[
        V_4=\{1,(12)(34),(13)(24),(14)(23)\}.
\]
Take
\[
        \rho=\Ind_{V_4}^{S_4}\mathbf 1_{V_4}.
\]
Here \(\mathbf 1_{V_4}\) denotes the trivial character of \(V_4\).
Then
\[
        \dim \rho=[S_4:V_4]=6,
        \qquad
        \rho\cong [4]\oplus[1111]\oplus 2[22].
\]
So the ambient dimension drops from \(24\) to \(6\), while the \([22]\)-isotypic
component is still 4-dimensional.

Using the corresponding coset-basis vector, where \(\delta_{xV_4}\) denotes
the basis vector indexed by the coset \(xV_4\),
\[
        u=\delta_{V_4}+2\delta_{sV_4}+3\delta_{tV_4},
\]
the same computation, with \(P_{[22]}\) acting on this induced representation,
reaches \(F(v_\infty)=0.500000\).  Thus the algorithm works in dimension \(6\)
instead of dimension \(24\), while the optimization step is still needed.

For comparison, one can choose still smaller ambient representations in which
\([22]\) occurs with multiplicity one.  For instance, the action of \(S_4\) on
the three pair partitions of \(\{1,2,3,4\}\) gives
\[
        \mathbb C\{12|34,13|24,14|23\}
        \cong
        \mathbf 1\oplus [22].
\]
In that model the projection already isolates the desired irreducible
representation.  It is more efficient, but it does not illustrate the
optimization step; this is why the reduced example above uses \(V_4\).

\section{Computational Remarks}

Let \(q=\dim V\) be the dimension of the chosen ambient representation.  The
regular representation corresponds to \(q=|G|\).  If the action matrices are
stored densely, the method is quickly impractical.  In the regular and induced
settings, however, the matrices are usually permutation or monomial matrices,
so one applies \(\rho(g)\) by permuting coordinates and multiplying by scalars.

A full evaluation of \(F\), or one gradient step, requires applying \(\rho(g)\)
for all \(g\in G\).  With sparse, permutation, or monomial actions, this gives
the following rough cost comparison:
\[
\begin{array}{c|c|c}
\text{ambient representation} & \text{dimension } q & \text{cost per full pass} \\
\hline
\mathbb C[G] & |G| & O(|G|^2) \\
\Ind_A^G\theta,\ \theta \text{ linear} & [G:A] & O(|G|[G:A]) \\
\text{general sparse ambient } V & q & O(|G|q)
\end{array}
\]
The same estimates apply to applying the projector \(P_\chi\) to a vector.
Thus replacing the regular representation by an induced representation from a
large abelian subgroup can substantially reduce the computation.

The extraction of an orbit basis should also be done with numerical care.  In
exact arithmetic, it is enough to choose group elements \(h_1,\dots,h_d\) for
which \(\rho(h_i)v\) are independent.  In floating point, one should use a
rank-revealing QR decomposition or SVD on the orbit matrix, then compute the
restricted matrices from \(\rho(s)B=BA_s\), preferably by least squares if the
data are approximate.  Thus the regular representation is a reasonable
baseline for small examples, while induced and permutation representations are
natural practical alternatives for larger groups.

\section{Conclusion}

We have described a direct construction of explicit irreducible matrix
representations from irreducible characters: project to the desired isotypic
component, maximize the norm of a diagonal matrix coefficient, and extract a
basis from the orbit of a maximizing vector.  The regular representation gives
a universal construction, while smaller ambient representations give practical
versions for computation.

\end{document}